\providecommand{\U}[1]{\protect\rule{.1in}{.1in}}
\newtheorem{theorem}{Theorem}[section]
\newtheorem{lemma}[theorem]{Lemma}
\newtheorem{remark}[theorem]{Remark}
\newtheorem{example}[theorem]{Example}
\newtheorem{examples}[theorem]{Examples}
\newtheorem{foo}[theorem]{Remarks}
\def\vint{\mathop{\mathchoice%
          {\setbox0\hbox{$\displaystyle\intop$}\kern 0.22\wd0%
           \vcenter{\hrule width 0.6\wd0}\kern -0.82\wd0}%
          {\setbox0\hbox{$\textstyle\intop$}\kern 0.2\wd0%
           \vcenter{\hrule width 0.6\wd0}\kern -0.8\wd0}%
          {\setbox0\hbox{$\scriptstyle\intop$}\kern 0.2\wd0%
           \vcenter{\hrule width 0.6\wd0}\kern -0.8\wd0}%
          {\setbox0\hbox{$\scriptscriptstyle\intop$}\kern 0.2\wd0%
           \vcenter{\hrule width 0.6\wd0}\kern -0.8\wd0}}%
          \mathopen{}\int}
\begin{document}
\title[High-order Lane-Emden equations on the sphere $\mathbb{S}^n$]{Sharp subcritical Sobolev inequalities and uniqueness of nonnegative solutions to high-order Lane-Emden equations   on $\mathbb{S}^n$    }

\author{Lu Chen}
\address{School of Mathematics and Statistics, Beijing Institute of Technology, Beijing 100081, P. R. China}
\email{chenlu5818804@163.com}

\author{Guozhen Lu}
\address{Department of Mathematics\\
University of Connecticut\\
Storrs, CT 06269, USA}
\email{guozhen.lu@uconn.edu}

\author{Yansheng Shen}
\address{School of Mathematical Sciences, Beijing Normal University, Beijing 100875, P.R. China.}
\email{ysshen@mail.bnu.edu.cn}

\thanks{The first author was partly supported by NSFC (No.11901031). The second author was partly supported by a grant from the Simons Foundation.}

\begin{abstract}
In this paper, we are concerned with the uniqueness result for non-negative solutions of the higher-order Lane-Emden equations involving the GJMS operators on $\mathbb{S}^n$. Since the classical moving-plane method based on the Kelvin transform and maximum principle fails in dealing with the high-order elliptic equations in $\mathbb{S}^n$, we first employ the Mobius transform between $\mathbb{S}^n$ and $\mathbb{R}^n$, poly-harmonic average and iteration arguments to show that the higher-order Lane-Emden equation on $\mathbb{S}^n$ is equivalent to some integral equation in $\mathbb{R}^n$. Then we apply the method of moving plane in integral forms and the symmetry of sphere to obtain the uniqueness of nonnegative solutions to the higher-order Lane-Emden equations with subcritical polynomial growth on $\mathbb{S}^n$. As an application, we also identify the best constants and classify the extremals of the sharp subcritical high-order Sobolev inequalities involving the GJMS operators on $\mathbb{S}^n$. Our results do not seem to be in the literature even for the Lane-Emden equation and sharp subcritical Sobolev inequalities for first order derivatives on $\mathbb{S}^n$.
\end{abstract}

\maketitle

\maketitle {\small {\bf Keywords:} Uniqueness of nonnegative solutions; Lane-Emden equations; Sharp constants; Extremal functions; Moving-plane method; GJMS operators on $\mathbb{S}^n.$ \\

{\bf 2010 MSC.} 5J30; 46E35; 35B06; 35A02. }

\section{Introduction and main results}
The main purpose of this paper is to discuss the uniqueness of non-negative solutions of the high-order Lane-Emden equations in $\mathbb{S}^n$ and establish the sharp subcritical high-order Sobolev inequality involving the GJMS operator in $\mathbb{S}^n$. For $m\in \mathbb{N}^{*}$, $1\leq p<+\infty$, PDEs of the form
\begin{equation}\label{sem}
(-\Delta)^m u=u^{p}\ \ {\rm in}\ \mathbb{R}^n\, \ \ \
\end{equation}
are called higher-order Lane-Emden equations. These equations have numerous important applications in conformal geometry,  mathematical physics
and astrophysics et al., we refer the readers to e.g. \cite{CH,CGS, Gazzola, Stru} and the references therein.
\medskip

When $m=1$, Gidas, Ni and Nirenberg in \cite {GNN} proved that any positive solution of the Lane-Emden equation \eqref{sem} in the critical case $p=\frac{n+2}{n-2}$ satisfying $u(x)=O(\frac{1}{|x|^{n-2}})$ at infinity must take the form as
$$u(x)=\frac{\big(n(n-2)\lambda^2\big)^{\frac{n-2}{4}}}{\big(\lambda^2+|x-x_0|^2\big)^{\frac{n-2}{2}}},$$
for some $\lambda>0$ and $x_0\in \mathbb{R}^n$. Then,
Caffarelli, Gidas and Spruck \cite{CGS} removed the decay condition at infinity and obtained the same result by employing the Kelvin-transform. It should be mentioned that the classification of the solutions of Lane-Emden equations in the critical case plays an important role in the study of the well-known Yamable problem and the prescribing scalar curvature problem. In the subcritical case $1\leq p<\frac{n+2}{n-2}$, Gidas and Spruck \cite{GS} proved that any nonnegative solution of the Lane-Emden equations must be equal to zero. Later, Chen and Li in \cite {CL1} gave a  simpler proof of this Liouville type result.
\medskip

When $m=2$, $n>4$ and $1\leq p<\frac{n+4}{n-4}$, Lin \cite{Lin} applied the method of moving plane to prove that all nonnegative $C^4(\mathbb{R}^n)$ smooth solutions must be equal to zero. Wei and Xu \cite{WX} generalized Lin's
results to the cases when $m$ is any integer.

\medskip

It is well known that $-\Delta$, $(-\Delta)^2$, $\cdot\cdot\cdot (-\Delta)^m$ are the high-order conformal Laplace operators in $\mathbb{R}^n$.
Let $(M,g)$ be a complete Riemannian manifold of dimension $n$. Then the conformal
Laplacian or the Yamabe operator $P_{1,g}$ on Riemannian manifold $(M,g)$ is defined by $P_{1,g}=-\Delta_{g}+\frac{n-2}{4(n-1)}R_g$, where $R_{g}$ is the scalar curvature of  $(M,g)$. A fourth order conformally invariant operator $P_{2,g}$ with the leading term
$(-\Delta)^2_{g}$ was discovered by Paneitz \cite{Pan} and later Branson \cite{Bra} found a conformal sixth order operator $P_{3,g}$ with the
leading term $(-\Delta)^3_{g}$. The existence of a general conformal operator $P_{m,g}$ of order $2m$ was obtained by Graham, Jenne, Mason and Sparling \cite{GJMS} and such an operator is  known as the GJMS operator. (see also \cite{J}) From their work, we know that if $(M,g)$ is a Riemannian manifold of even dimension $n$, then for $k=1,2,\cdot\cdot\cdot,\frac{n}{2}$, there exists a conformally invariant differential operator $P_{k,g}$ of the form
$P_{k,g}=(-\Delta)^{k}_g+$ lower order terms, satisfying for a conformal metric $\tilde{g}=e^{2u}g$,
$$P_{k,\tilde{g}}(v)=e^{-(\frac{n}{2}+k)u}P_{k,g}(e^{(\frac{n}{2}-k)u}v).$$
It is an interesting fact pointed out in \cite{G} and \cite{GH} that when $n$ is even and $k>\frac{n}{2}$, a conformal operator $P_{k,g}$ may not exist. Hence, $P_{\frac{n}{2},g}$ is usually called as the critical GJMS operator.
\medskip

In the spirit of the aforementioned works in the Euclidean spaces, a natural question arises: Does there exist a Liouville-type result for the following Lane-Emden equation on a complete Riemannian manifold $(M,g)$ with dimension $n>2m$
\begin{equation}\label{hlane}
P_{m,g}(u)=u^{p},\ u\geq 0 \ in \ M,\ \
\end{equation}
in the subcritical case $1\leq p<\frac{n+2m}{n-2m}?$ It is well-known that the method based on the Kelvin-transform and moving plane fails in dealing with the elliptic equations on the general Riemannian manifold. In this paper, we make an attempt to answer this question in the special manifold: the unit sphere $\mathbb{S}^n$. In $S^n$ $(n>2m)$, it follows from Branson's work in \cite{Bra1} that the explicit expression of the conformal operator $P_{m,g_{\mathbb{S}^n}}$ of order $2m$ is given by
$$P_{m,g_{\mathbb{S}^n}}=P_{1,g_{\mathbb{S}^n}}(P_{1,g_{\mathbb{S}^n}}-2)(P_{1,g_{\mathbb{S}^n}}-6)\cdot\cdot\cdot(P_{1,g_{\mathbb{S}^n}}-m(m-1)),$$
where $P_{1,g_{\mathbb{S}^n}}=-\Delta_{g_{\mathbb{S}^n}}+\frac{n(n-2)}{4}$ and $\Delta_{g_{\mathbb{S}^n}}$ denotes the Laplace-Beltrami operator in $\mathbb{S}^n$. (See also Hang \cite{Hang1})
\vskip0.1cm

The first result of the present paper is the following:
\begin{theorem}\label{thm1}
Assume that $u$ is a nonnegative $C^{2m}(\mathbb{S}^n)$ solution satisfying the elliptic equation
\begin{equation}\label{hlane}
P_{m,g_{\mathbb{S}^n}}(u)=u^{p}\  {\rm in}\ \mathbb{S}^n, \ \ 1\leq p<\frac{n+2m}{n-2m},
\end{equation}
then $u$ must be equal to some constant.
\end{theorem}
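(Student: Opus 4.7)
The plan is to reduce the problem to an integral equation on $\mathbb{R}^n$ via stereographic projection, then combine the integral-form moving plane method with the full rotational symmetry of $\mathbb{S}^n$. Pick an arbitrary point $N\in \mathbb{S}^n$ and let $\pi:\mathbb{S}^n\setminus\{N\}\to\mathbb{R}^n$ be the stereographic projection from $N$. Set
\[
\tilde u(x) := \Big(\tfrac{2}{1+|x|^2}\Big)^{(n-2m)/2} u(\pi^{-1}(x)).
\]
The conformal covariance of $P_{m,g_{\mathbb{S}^n}}$ with respect to $g_{\mathbb{S}^n}=(2/(1+|x|^2))^2 g_{\mathbb{R}^n}$ converts \eqref{hlane} into
\[
(-\Delta)^m \tilde u(x) = K(x)\,\tilde u(x)^p,\qquad K(x):=\Big(\tfrac{2}{1+|x|^2}\Big)^{\tau},\quad \tau:=\tfrac{n+2m}{2}-p\cdot\tfrac{n-2m}{2},
\]
with $\tau>0$ strictly in the subcritical regime. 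Smoothness of $u$ at $N$ gives $\tilde u(x)=O(|x|^{-(n-2m)})$ at infinity, and the decay of $K$ makes $\int_{\mathbb{R}^n} K\tilde u^p\,dy$ finite.

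The next and most technical step is to show that $\tilde u$ satisfies the integral equation
\[
\tilde u(x)=c_{n,m}\int_{\mathbb{R}^n}\frac{K(y)\,\tilde u(y)^p}{|x-y|^{n-2m}}\,dy.
\]
Defining $v$ by this right-hand side gives $(-\Delta)^m v=K\tilde u^p$ and $v\to 0$ at infinity, so $h:=\tilde u-v$ is poly-harmonic on all of $\mathbb{R}^n$. A spherical-average (``poly-harmonic mean'') iteration applied successively to $\Delta^{m-1}h, \Delta^{m-2}h,\ldots,h$, combined with the Liouville theorem for bounded harmonic functions and the decay of $h$, forces $h\equiv 0$. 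I expect this to be the main obstacle: no maximum principle is available for $(-\Delta)^m$ when $m\ge 2$, so moving planes cannot be run on the PDE directly, and the poly-harmonic iteration is precisely what replaces the maximum principle and delivers the integral identity one needs.

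With the integral equation in hand, apply the moving plane method in integral form. Because $K(y)$ is radial and strictly decreasing in $|y|$, comparing $\tilde u$ with its reflection across a hyperplane $\{y_1=\lambda\}$ and exploiting the monotonicity of $K$ lets one slide $\lambda$ all the way down to $0$ by a standard Hardy--Littlewood--Sobolev-type argument. Doing this in every direction shows that $\tilde u$ is symmetric across every hyperplane through the origin, and hence is radial about $0\in\mathbb{R}^n$.

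Finally, exploit the homogeneity of $\mathbb{S}^n$. Radial symmetry of $\tilde u$ about the origin translates, via $\pi^{-1}$ and the radial conformal factor, into invariance of $u$ under all rotations of $\mathbb{S}^n$ fixing the axis through $N$ and $-N$. Since $N$ was arbitrary, $u$ is radial about every point of $\mathbb{S}^n$. Given two points $P,Q\in\mathbb{S}^n$, the midpoint $M$ of the geodesic arc joining them makes $P$ and $Q$ equidistant from $M$, so $u(P)=u(Q)$; therefore $u$ is constant, which also trivially includes the case $\tilde u\equiv 0$.
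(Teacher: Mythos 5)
Your overall architecture coincides with the paper's: stereographic projection to the weighted equation $(-\Delta)^m\tilde u=K\tilde u^p$ with $K(x)=(2/(1+|x|^2))^{\tau}$, reduction to the integral equation with kernel $|x-y|^{2m-n}$, moving planes in integral form driven by the strict radial decrease of $K$ (which pins the critical plane at $0$), and then the arbitrariness of the projection point plus the homogeneity of $\mathbb{S}^n$ to force constancy. Where you genuinely diverge is the passage to the integral equation. The paper first proves the super poly-harmonic property $(-\Delta)^i u\ge 0$, $i=1,\dots,m-1$, through a lengthy ``re-center and iterate'' spherical-average argument that produces quadratic growth contradicting the bound $u\le C_n(1+|x|^2)^{m-n/2}$, and only then builds the representation layer by layer (Green's function on balls, maximum principle, Liouville for nonnegative harmonic functions, composition of the kernels $|x-y|^{2-n}$). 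You instead define $v$ as the Riesz potential of $K\tilde u^p$ directly and argue that $h=\tilde u-v$ is an entire polyharmonic function which is bounded and decays, hence vanishes. This is legitimate here precisely because smoothness of $u$ on $\mathbb{S}^n$ hands you the decay $\tilde u=O(|x|^{2m-n})$ for free, so the heavy super-polyharmonicity machinery (designed for the Euclidean Lane--Emden problem, where no a priori decay is available) can be bypassed; your route is shorter and arguably cleaner. One detail to make explicit when you write it up: ``Liouville for bounded harmonic functions'' cannot be applied to $\Delta^{m-1}h$ until you know that iterate is bounded, which does not follow from boundedness of $h$ alone without interior elliptic estimates; alternatively, invoke the Liouville theorem for polyharmonic functions (a bounded entire solution of $\Delta^m h=0$ is constant, e.g.\ because its Fourier transform is supported at the origin, so it is a polynomial), after which the decay of $h$ gives $h\equiv 0$. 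With that point filled in, your argument is complete and delivers the same conclusion as the paper's.
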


\begin{remark}
Our method is based on the Mobius transform between $\mathbb{S}^n$ and $\mathbb{R}^n$, poly-harmonic average, iteration arguments and symmetry of the sphere. We first employ the Mobius transform to convert the high-order Lane-Emden equation \eqref{hlane} in $\mathbb{S}^n$ into the high-order elliptic equation in $\mathbb{R}^n$. Then by establishing the super poly-harmonic properties of nonnegative solutions, we derive the equivalence between the high-order elliptic equation in $\mathbb{R}^n$ and some integral equation in $\mathbb{R}^n$. Finally, by applying the method of moving-plane in integral forms developed by Chen, Li and Ou \cite{CLO} combining with the symmetry of sphere, we conclude that any non-negative solution of high-order Lane-Emden equation \eqref{hlane} in $\mathbb{S}^n$  must be equal to some constant.
\end{remark}

Slightly modifying the proof of Theorem \ref{thm1}, we can obtain the following general Liouville-type theorem.

\begin{theorem}\label{thmp1}
 Let $f(t)=\sum_{k=1}^{l}a_kt^{p_k}$ with $a_k\geq0$ and $1\leq p_1\leq p_2\leq \cdot\cdot\cdot\leq p_{k-1}<p_k=\frac{n+2m}{n-2m}$ and assume that $u$ is a nonnegative $C^{2m}(\mathbb{S}^n)$ solution satisfying the elliptic equation
\begin{equation}\label{hlane1}
P_{m,g_{\mathbb{S}^n}}(u)=f(u)\  {\rm in}\ \mathbb{S}^n.
\end{equation}
Then $u$ must be equal to some constant.
\end{theorem}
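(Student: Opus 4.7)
The plan is to follow the four-step scheme of Theorem \ref{thm1}, adapted to the sum of powers. First, via the stereographic projection $\pi : \mathbb{S}^n \setminus \{N\} \to \mathbb{R}^n$ and the conformal covariance of $P_{m, g_{\mathbb{S}^n}}$, I would set $v(x) = \bigl(\tfrac{1+|x|^2}{2}\bigr)^{-(n-2m)/2} u(\pi^{-1}(x))$. A direct computation transforms \eqref{hlane1} into
\begin{equation*}
(-\Delta)^m v = \sum_{k=1}^{l} a_k K_k(x)\, v^{p_k} \quad \text{on } \mathbb{R}^n, \qquad K_k(x) = \Bigl(\tfrac{2}{1+|x|^2}\Bigr)^{\alpha_k},\ \alpha_k = \tfrac{(n+2m)-(n-2m)p_k}{2}.
\end{equation*}
For the critical index $p_l = (n+2m)/(n-2m)$ one has $\alpha_l = 0$ and $K_l \equiv 1$, whereas for every strictly subcritical $p_k$ ($k < l$) one has $\alpha_k > 0$, which makes $K_k$ a smooth, positive, radial function strictly decreasing in $|x|$.

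Next, since $a_k \geq 0$, $K_k > 0$ and $v \geq 0$ ensure that the right-hand side is nonnegative, the iteration on spherical averages used in Theorem \ref{thm1} to produce the super poly-harmonic inequalities $(-\Delta)^j v \geq 0$ for $j = 0,1,\ldots,m-1$ carries over verbatim. From these I would derive the equivalent integral representation
\begin{equation*}
v(x) = c_{n,m} \int_{\mathbb{R}^n} \frac{1}{|x-y|^{n-2m}} \sum_{k=1}^{l} a_k K_k(y)\, v(y)^{p_k} \, dy.
\end{equation*}

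The decisive step is the method of moving planes in integral form of Chen-Li-Ou applied to this representation. The critical term ($K_l \equiv 1$, $p_l = (n+2m)/(n-2m)$) is scale/Kelvin invariant and by itself does not pin the symmetry center; however, the hypothesis guarantees the presence of at least one strictly subcritical exponent $p_{l-1}$, for which the radially decreasing weight $K_{l-1}$ breaks translation invariance and allows the moving plane to be pushed all the way to the origin in every direction. This forces $v$ to be radially symmetric about $0 \in \mathbb{R}^n$ and monotone in $|x|$. Because the original equation on $\mathbb{S}^n$ is invariant under rotations, the stereographic projection may be based at any pole, so the conclusion gives radial symmetry of $u$ about every point of $\mathbb{S}^n$; hence $u$ is constant.

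The principal obstacle will be controlling the critical-exponent term in the moving-plane difference inequalities. Because $p_l$ is precisely the Sobolev exponent, the natural decay of $v$ from the integral equation makes $\int K_l(y) v(y)^{p_l}\,dy$ only borderline integrable, and one cannot ignore this contribution by brute $L^p$ smallness on small or large balls. The key will be to use the strict positivity of the subcritical weights $K_k$ (with $k < l$, $a_k > 0$) to produce a nondegenerate gain in the Hardy-Littlewood-Sobolev comparison, absorbing the critical term and closing the iteration. Once the plane can be moved to the origin, the conclusion that $u$ is constant on $\mathbb{S}^n$ follows exactly as in Theorem \ref{thm1}.
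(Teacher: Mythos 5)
Your outline is the same as the paper's intended argument: the paper offers no separate proof of Theorem \ref{thmp1} beyond the remark that one ``slightly modifies'' the proof of Theorem \ref{thm1}, and your steps (Mobius transform to an equation of the form \eqref{eq1} with a sum of weighted powers, super poly-harmonic property via spherical averages and re-centering, the integral representation, moving planes in integral form, then rotating the pole on $\mathbb{S}^n$) are precisely that modification. One correction on where the difficulty actually sits: the Hardy--Littlewood--Sobolev smallness steps are \emph{not} borderline for the critical term. Since $u$ is bounded on $\mathbb{S}^n$, the transformed solution satisfies $v(x)\le C(1+|x|^2)^{m-\frac{n}{2}}$, so for every $k$ -- including $p_l=\frac{n+2m}{n-2m}$ with $K_l\equiv 1$ -- one has $\bigl[v^{p_k-1}K_k\bigr]^{\frac{n}{2m}}\le C(1+|x|^2)^{-n}\in L^1(\mathbb{R}^n)$, which is exactly the estimate \eqref{bou-norm-est}; hence Step 1 and the continuation of the plane go through term by term without any absorption trick. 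The only place criticality matters is the endpoint argument forcing $\lambda_0=0$: there the critical term contributes nothing to the final strict inequality (its weight is constant and Kelvin invariant), and the contradiction must be produced by a subcritical term whose weight $K_k$ is strictly radially decreasing -- which is what you say.

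The substantive caveat is that this endpoint step requires $a_k>0$ for at least one \emph{strictly subcritical} exponent, whereas the hypothesis of the theorem only assumes $a_k\ge 0$. If every subcritical coefficient vanishes, the conclusion itself fails: $P_{m,g_{\mathbb{S}^n}}u=a_l\,u^{\frac{n+2m}{n-2m}}$ admits the non-constant family of conformal factors (pull-backs of the Aubin--Talenti bubbles), and correspondingly the moving plane only yields symmetry about \emph{some} hyperplane, not about $x_1=0$. So you should state explicitly that your argument (and, implicitly, the theorem) needs at least one strictly positive subcritical coefficient together with $u\not\equiv 0$; with that proviso your proof is correct and coincides with the paper's.
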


Aubin (\cite{Aub1}, \cite{Aub3}) and Talenti \cite{Tal} established the sharp Sobolev inequality for the first order derivatives:
$$\int_{\mathbb{R}^n}|\nabla u|^2dx\geq S_{n} \big(\int_{\mathbb{R}^n}|u|^{\frac{2n}{n-2}}dx\big)^{\frac{n-2}{n}},$$
where $S_n=\frac{n(n-2)}{4}|\mathbb{S}^n|^{\frac{2}{n}}$ denotes the sharp constant of the above Sobolev inequality. The sharp constant $S_n$ can be achieved if and only if $u$ takes the form as $$u(x)=\frac{1}{(1+|x-x_0|^2)^{\frac{n-2}{2}}}$$
up to some translation and dilation. In order to solve the famous Yamabe problem, Aubin (see \cite{Aub1, Aub2}) also established sharp first order Sobolev inequality in $W^{1,2}(\mathbb{S}^n)$:
$$\int_{\mathbb{S}^n}|\nabla_{g_{\mathbb{S}^n}}u|^2d\sigma+\frac{n(n-2)}{4}\int_{\mathbb{S}^n}|u|^2d\sigma\geq \frac{n(n-2)}{4}|\mathbb{S}^n|^{\frac{2}{n}}\big(\int_{\mathbb{S}^n}|u|^{\frac{2n}{n-2}}d\sigma\big)^{\frac{n-2}{n}},$$
where $d\sigma$ denotes the surface measure in $\mathbb{S}^{n}$.
Later, Beckner\cite{Be} obtained the sharp Sobolev inequality for high-order derivatives in $W^{m,2}(\mathbb{S}^n)$ $(n>2m)$:
$$\int_{\mathbb{S}^n}P_{m,g_{\mathbb{S}^n}}(u)ud\sigma\geq S_{n,m} \big(\int_{\mathbb{S}^n}|u|^{\frac{2n}{n-2m}}d\sigma\big)^{\frac{n-2m}{n}},$$
where $P_{m,g_{\mathbb{S}^n}}$ denotes $2m$th-order conformal Laplacian of $u$ in $\mathbb{S}^n$ and $$S_{n,m}=\frac{n(n-2)}{4}(\frac{n(n-2)}{4}-2)(\frac{n(n-2)}{4}-6)\cdot\cdot\cdot(\frac{n(n-2)}{4}-m(m-1))|\mathbb{S}^n|^{\frac{2m}{n}}.$$
By conformal transformation between $\mathbb{S}^n$ and $\mathbb{R}^n$, the above inequality is in fact equivalent to the high-order Sobolev inequality in $\mathbb{R}^n$: $$\int_{\mathbb{R}^n}|\nabla^m u|^2dx\geq S_{n,m} \big(\int_{\mathbb{R}^n}|u|^{\frac{2n}{n-2m}}dx\big)^{\frac{n-2m}{n}},$$
with equality holds if and only if $u$ takes the form as $$u(x)=\frac{1}{(1+|x-x_0|^2)^{\frac{n-2m}{2}}}$$
up to some translation and dilation. In $\mathbb{R}^n$, the existence of extremals of the high-order Sobolev inequality can be obtained by the technique of symmetry and rearrangement or concentration-compactness principle (see \cite{Lieb} and \cite{Lions}) and the sharp constant can also be calculated by classifying the non-negative solutions of the high-order  equations
$$(-\Delta)^m=u^{\frac{n+2m}{n-2m}}.$$
We refer the reader to the works of, e.g.,  X. J. Wang \cite{Wang}, Swanson \cite{Swanson}, R. C. A. M. Van der Vorst \cite{Van},  Djadli,  Hebey and Ledoux \cite{DHL},   A. Cotsiolis and N. Tavoularis \cite{Cos}.
Obviously, in the case $n>2m$, $\big(\int_{\mathbb{S}^n}P_{m,g_{\mathbb{S}^n}}(u)ud\sigma\big)^{\frac{1}{2}}$ is also equivalent to the $m$th-order Sobolev norm in $W^{m,2}(\mathbb{S}^n)$. By the Sobolev compact embedding theorem, the sharp constant of subcritical high-order Sobolev inequality: $$S_{m,n,p}=\inf_{u\in W^{m,2}(\mathbb{S}^n)}\frac{\int_{\mathbb{S}^n}P_{m,g_{\mathbb{S}^n}}(u)ud\sigma}{\big(\int_{\mathbb{S}^n}|u|^{p}d\sigma\big)^{\frac{2}{p}}}$$
for $1<p<\frac{n+2m}{n-2m}$ can be achieved. However, so far, nothing is known for the accurate value of the sharp constant $S_{m,n,p}$ and precise forms of extremal functions. In this paper, as an application of our Theorem \ref{thm1}, we can classify the extremals of the above subcritical Sobolev inequality and identify the corresponding sharp constant $S_{m,n,p}$.
\begin{theorem}\label{thm2}
The following sharp subcritical Sobolev inequality holds for all $u\in W^{m,2}(\mathbb{S}^n)$:
$$
\big(\int_{\mathbb{S}^n}|u|^{p}d\sigma\big)^{\frac{2}{p}}\le S_{m, n. p}
 \int_{\mathbb{S}^n}P_{m,g_{\mathbb{S}^n}}(u)ud\sigma,  \ \ 2< p<\frac{2n}{n-2m}.
$$
If $u$ is an extremal function of  the above inequality, then  $u$ must be equal to some constant.
Moreover, the best constant
\begin{equation}\label{sub}
S_{m,n,p}=\inf_{u\in W^{m,2}(\mathbb{S}^n)}\frac{\int_{\mathbb{S}^n}P_{m,g_{\mathbb{S}^n}}(u)ud\sigma}{\big(\int_{\mathbb{S}^n}|u|^{p}d\sigma\big)^{\frac{2}{p}}}
\end{equation}
 is equal to
$$S_{m,n,p}=\frac{\frac{n(n-2)}{4}(\frac{n(n-2)}{4}-2)(\frac{n(n-2)}{4}-6)\cdot\cdot\cdot(\frac{n(n-2)}{4}-m(m-1))|\mathbb{S}^n|}{|\mathbb{S}^n|^{\frac{2}{p}}}.$$
\end{theorem}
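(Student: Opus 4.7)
The plan is a variational argument in three steps: existence of a minimizer by compact embedding, reduction to the nonnegative case, and classification via Theorem \ref{thm1}.

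First, because $n>2m$ all eigenvalues of $P_{m,g_{\mathbb S^n}}$ are strictly positive, so the quadratic form $u\mapsto \int_{\mathbb S^n} P_{m,g_{\mathbb S^n}}(u)\,u\,d\sigma$ is equivalent to the square of the $W^{m,2}(\mathbb S^n)$-norm. A minimizing sequence $\{u_k\}$ for the quotient in \eqref{sub}, normalized by $\int_{\mathbb S^n}|u_k|^p\,d\sigma=1$, is therefore bounded in $W^{m,2}(\mathbb S^n)$. Since $p<\tfrac{2n}{n-2m}$, the Sobolev embedding $W^{m,2}(\mathbb S^n)\hookrightarrow L^p(\mathbb S^n)$ is compact, so a subsequence converges strongly in $L^p$ and weakly in $W^{m,2}$ to some $u_0$ with $\int|u_0|^p\,d\sigma=1$; weak lower semicontinuity of the quadratic form then gives $\int_{\mathbb S^n}P_{m,g_{\mathbb S^n}}(u_0)u_0\,d\sigma=S_{m,n,p}$, so the infimum is attained.

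Second, I would repeat this compactness argument on the closed convex cone $\{u\in W^{m,2}(\mathbb S^n):u\ge0\}$ to produce a nonnegative minimizer $u_0\ge 0$ of the corresponding nonnegative problem. Elliptic regularity for the GJMS operator on the smooth compact manifold $\mathbb S^n$ upgrades $u_0$ to a classical $C^{2m}$ solution of the Euler--Lagrange equation $P_{m,g_{\mathbb S^n}}(u_0)=\mu\,u_0^{p-1}$ for some $\mu>0$. The assumption $2<p<\tfrac{2n}{n-2m}$ forces $1<p-1<\tfrac{n+2m}{n-2m}$, so after rescaling $v=\mu^{1/(p-2)}u_0$ one obtains a nonnegative $C^{2m}(\mathbb S^n)$ solution of $P_{m,g_{\mathbb S^n}}(v)=v^{p-1}$, exactly the hypothesis of Theorem \ref{thm1}. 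Hence $v$, and therefore $u_0$, must be constant. Plugging $u\equiv 1$ into the quotient and using the Branson factorization $P_{m,g_{\mathbb S^n}}(1)=\tfrac{n(n-2)}{4}\bigl(\tfrac{n(n-2)}{4}-2\bigr)\cdots\bigl(\tfrac{n(n-2)}{4}-m(m-1)\bigr)$ immediately yields the stated value of $S_{m,n,p}$.

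The main obstacle I anticipate is showing that every extremal of the full unrestricted inequality is of constant sign, so that Theorem \ref{thm1} can be invoked and the nonnegative minimization really coincides with the full one. For $m=1$ this is immediate from Kato's inequality $|\nabla|u||\le|\nabla u|$, but for $m\ge 2$ there is no pointwise analogue for $|\nabla^m u|$. One workaround is to show that every critical point of the full quotient is of one sign by exploiting the Mobius transform and the integral-equation reformulation already developed in the proof of Theorem \ref{thm1}: a sign-changing extremal would produce a sign-changing solution of the corresponding integral equation on $\mathbb R^n$, and the polyharmonic averaging combined with the moving-plane-in-integral-form argument either rules this out directly or forces the solution to be of one sign, after which the classification finishes the job.
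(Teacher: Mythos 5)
Your first and last steps (existence of a minimizer by compact embedding, and the computation of $S_{m,n,p}$ by evaluating the quotient at $u\equiv 1$ using Branson's factorization) are fine, and your reduction of the Euler--Lagrange equation to the hypotheses of Theorem \ref{thm1} is the same endgame as the paper's. But the proof has a genuine gap exactly at the point you flag as "the main obstacle": you never actually establish that an extremal of the \emph{unrestricted} inequality \eqref{sub} has one sign. Minimizing over the cone $\{u\ge 0\}$ does not close this gap for two reasons: (i) a minimizer over a convex cone satisfies a variational inequality (a complementarity condition), not the equation $P_{m,g_{\mathbb S^n}}(u_0)=\mu u_0^{p-1}$, and for $m\ge 2$ there is no maximum principle available to remove the constraint; and (ii) even granting a constant minimizer of the restricted problem, you would only have identified $\inf$ over nonnegative $u$, not the infimum over all of $W^{m,2}(\mathbb S^n)$, so neither the stated inequality for sign-changing $u$ nor the classification of its extremals would follow. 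Your proposed workaround --- feeding a sign-changing extremal into the Mobius/integral-equation machinery of Theorem \ref{thm1} --- cannot work as stated: that machinery (Jensen's inequality in the spherical averages, the super poly-harmonic property, and the representation by the positive kernel $|x-y|^{2m-n}$) uses $u\ge 0$ from the very first step, and the Euler--Lagrange equation of a sign-changing extremal is $P_{m,g_{\mathbb S^n}}(u)=\mu|u|^{p-2}u$, which is not of the form \eqref{hlane}.

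The paper closes this gap by a different, and essentially unavoidable, device: a duality argument. Setting $v=P_{m,g_{\mathbb S^n}}^{1/2}(u)$, the inequality \eqref{sub} is shown to be equivalent, by duality in $L^{p}$--$L^{p'}$, to a Hardy--Littlewood--Sobolev type inequality on $\mathbb S^n$,
$$
S_{m,n,p}\int_{\mathbb{S}^n}\int_{\mathbb{S}^n}\frac{v(\xi)v(\eta)}{|\xi-\eta|^{n-2m}}\,d\sigma_{\xi}d\sigma_{\eta}\le \Big(\int_{\mathbb{S}^n}|v|^{p'}d\sigma\Big)^{\frac{2}{p'}},
$$
whose kernel is positive, so its extremals are automatically nonnegative (replacing $v$ by $|v|$ can only increase the left-hand side). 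Translating back, the extremals of \eqref{sub} are nonnegative, and only then is Theorem \ref{thm1} invoked. If you want to repair your argument, you should either reproduce this duality step (using that $P_{m,g_{\mathbb S^n}}^{-1}$ has the positive kernel $c\,|\xi-\eta|^{2m-n}$) or supply some other genuine proof of one-signedness of extremals; as written, the sign issue is identified but not resolved.
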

\begin{remark}
It should be noted that Theorem \ref{thm2} is not a direct consequence of Theorem \ref{thm1}. Although an extremal of the functional associated with
 the high-order Sobolev inequality \eqref{sub} also satisfies the equation \eqref{hlane}, it is not easy to obtain the nonnegativity of the extremal.
  It is well known that the non-negativity of the extremals for general geometric inequality in Riemannian manifold $(M,g)$ for first order derivative
   is straightforward. This is mainly because for any $u\in W^{1,p}(M)$, one can derive
   $|u|\in W^{1,p}(M)$ with $\|\nabla_g(|u|)\|_{L^p(M)}\leq \|\nabla_gu\|_{L^p(M)}$. However, this property cannot hold for high-order
    derivatives. Hence the extremal of a geometric inequality of high-order derivatives may not be non-negative. Nevertheless, in
     our setting we are able to prove that the extremals of the high-order Sobolev inequality \eqref{sub} associated with the GJMS operator
     are non-negative. Our method is based on the equivalence between the high-order Sobolev inequality \eqref{sub} and the Hardy-Littlewood-Sobolev
     inequality in $\mathbb{S}^n$ (see Lieb \cite{Lieb}).
\end{remark}

\begin{remark}
Assume that $u_{p}$ is an extremal of the sharp subcritical high-order Sobolev inequality \eqref{sub}, then it is easy to check that
$$\lim_{p\rightarrow \frac{2n}{n-2m}}\frac{\int_{\mathbb{S}^n}P_{m,g}(u_{p})u_{p}d\sigma}{\big(\int_{\mathbb{S}^n}|u_{p}|^{p}d\sigma\big)^{\frac{2}{p}}}=
 \inf_{u\in W^{m,2}(\mathbb{S}^n)}\frac{\int_{\mathbb{S}^n}P_{m,g}(u)ud\sigma}{\big(\int_{\mathbb{S}^n}|u|^{\frac{2n}{n-2m}}d\sigma\big)^{\frac{n-2m}{n}}}.$$
 Hence in some sense, the sharp constant of critical geometric inequality can be deduced from the sharp constant of subcritical geometric inequality by
 the limiting procedure. This method of calculating the sharp constant of the critical geometric inequality is called subcritical approximation approach (One can  refer to \cite{Hang} for such an approach).
\end{remark}

\begin{remark}
When $m=1$, the above sharp subcritical high-order Sobolev inequality reduces to the following subcritical Sobolev inequality:
$$\int_{\mathbb{S}^n}|\nabla_{g_{\mathbb{S}^n}}u|^2d\sigma+\frac{n(n-2)}{4}\int_{\mathbb{S}^n}|u|^2d\sigma\geq \frac{n(n-2)}{4}|\mathbb{S}^n|^{1-\frac{2}{p}}\big(\int_{\mathbb{S}^n}|u|^{p}d\sigma\big)^{\frac{2}{p}}.$$
In \cite{Be}, Beckner applied the technique of spherical harmonic decomposition to obtain the sharp subcritical Sobolev inequality for first order derivative in $\mathbb{S}^n$. Our method is based on the moving-plane method in integral forms, different from Beckner's method and can be used to establish the sharp subcritical Sobolev inequality for high-order derivatives.
\end{remark}

The organization of the paper is as follows. In Section \ref{section2}, we will prove that the non-negative solutions of high-order Lane-Emden equation \eqref{hlane} must be equal to some constant, namely we shall give the proof of Theorem \ref{thm1}.
Section \ref{section3} establishes best constants and the classification of the extremals for the subcritical high-order Sobolev inequality \eqref{sub}
in $\mathbb{S}^n$, namely the proof of Theorem \ref{thm2}.

\section{Proof of Theorem \ref{thm1}}\label{section2}

In this section, we shall prove that the non-negative solutions of high-order Lane-Emden equation \eqref{hlane} must be equal to some constant, namely we shall give the proof of Theorem \ref{thm1}. We first use the Mobius transform between $\mathbb{S}^n$ and $\mathbb{R}^n$ to transform the high-order Lane-Emden equation \eqref{hlane} on $\mathbb{S}^n$ into the high-order elliptic equation in $\mathbb{R}^n$.
\medskip

Consider the Mobius transform $\phi: x\in\mathbb{R}^n\rightarrow \xi \in \mathbb{S}^n\setminus \{(0,0,\cdot\cdot\cdot,-1)\}$ given by
$$\xi_{i}=\frac{2x_i}{1+|x|^2}\ \ {\rm for}\ i=1,2,\cdot\cdot\cdot,n;\ \ \xi_{n+1}=\frac{1-|x|^2}{1+|x|^2}.$$ It is well known that
$\phi$ is a conformal transform satisfying  $\phi^{*}(g_{\mathbb{S}^n})=\big(\frac{2}{1+|x|^2}\big)^2dx^2$ (see Lieb \cite{Lieb}), where $dx^2$ denotes standard Euclidean measure in $\mathbb{R}^n$. Hence it follows from the basic property of conformal transform that for any $v\in C^{2m}(\mathbb{S}^n)$,
$$P_{m,g_{\mathbb{S}^n}}(v)=\Big(\frac{2}{1+|x|^2}\Big)^{-\frac{n}{2}-m}(-\Delta)^m\big((\frac{2}{1+|x|^2})^{\frac{n}{2}-m}v(\phi)\big).$$
Let $u(x)=(\frac{2}{1+|x|^2}\big)^{\frac{n}{2}-m}v(\phi)$, then the high-order Lane-Emden equation \eqref{hlane} can be written
as the following high-order elliptic equation in $\mathbb{R}^n$ for some positive dimensional constant $C_{n}$:
\begin{equation}\label{eq1}
\begin{cases}
(-\Delta)^mu=u^p\big(\frac{2}{1+|x|^2}\big)^{\frac{n+2m}{2}-p\frac{n-2m}{2}}\ \ {\rm in \  \mathbb{R}^n},\\
\ \ \ \ \ u(x)\le C_{n}(1+|x|^2)^{m-\frac{n}{2}}, \ \ \ \ \ \  { x\in \mathbb{R}^n}.
\end{cases}\end{equation}
\vskip0.1cm

Next, we want to prove that the nonnegative solutions of the high-order elliptic equation \eqref{eq1} in $\mathbb{R}^n$ must be radially decreasing about the origin for $1\leq p<\frac{n+2m}{n-2m}$. Once we obtain this result, then the nonnegative solutions of Lane-Emden equation \eqref{hlane} with the subcritical polynomial growth in $S^n$ must be symmetric about the south pole $(0,0,\cdot\cdot\cdot,-1)$ in $\mathbb{S}^n$. Using the symmetry of the sphere $\mathbb{S}^n$, we can similarly prove that the solution $u$ must be symmetric about any point in $\mathbb{S}^n$. Hence we conclude that the nonnegative solutions of Lane-Emden equation \eqref{hlane} with the subcritical polynomial growth in $\mathbb{S}^n$ must be equal to some constant, which accomplishes the proof of Theorem \ref{thm1}. Based on the above analysis, in order to obtain the desired result, we only need to show the symmetry of the nonnegative solutions of the high-order elliptic equation \eqref{eq1} for $1\leq p<\frac{n+2m}{n-2m}$. Our method is based on the moving plane in integral forms developed by Chen, Li and Ou \cite{CLO}. For this purpose, our first objective is to prove that any nonnegative solution of the high-order elliptic equation \eqref{eq1} in $\mathbb{R}^n$ must satisfy some integral equation and the following lemma will be crucially used in our proof.

\begin{lemma}\label{lem1}
Assume that $u\geq 0$ satisfies the high-order elliptic equation \eqref{eq1} and set $-\Delta u=u_1$, $(-\Delta)^2u=u_2$,$\cdot\cdot\cdot$, $(-\Delta)^{m-1}u=u_{m-1}$. If we furthermore suppose that $u_i\geq 0$ for $i=1,2, \cdot\cdot\cdot, m-1$, then $(u,u_1,\cdot\cdot\cdot, u_{m-1})$ satisfies the following integral system
\begin{equation}\label{bi3*}\begin{cases}
u(x)=c_{n}\int_{\mathbb{R}^n}\frac{u_1(y)}{|x-y|^{n-2}}dy,\  x\in \mathbb{R}^n, \\
u_{i}(x)=c_{n}\int_{\mathbb{R}^n}\frac{u_{i+1}(y)}{|x-y|^{n-2}}dy,\  x\in \mathbb{R}^n,\ i=1,2,\cdot\cdot\cdot, m-2,\\
u_{m-1}(x)=c_{n}\int_{\mathbb{R}^n}\frac{u^p(y)}{|x-y|^{n-2}}\big(\frac{2}{1+|y|^2}\big)^{\frac{n+2m}{2}-p\frac{n-2m}{2}}dy,\  x\in \mathbb{R}^{n},
\end{cases}\end{equation}
where $c_{n}=\frac{1}{n(n-2)v_n}$ and $v_n$ denotes the volume of unit ball in $\mathbb{R}^n$.
\end{lemma}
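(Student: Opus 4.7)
The approach is to derive the integral identities one level at a time by invoking the Riesz representation of nonnegative superharmonic functions on $\mathbb{R}^n$. Set $u_0 := u$ and, for bookkeeping, $u_m(y) := u^p(y)\bigl(\tfrac{2}{1+|y|^2}\bigr)^{\frac{n+2m}{2}-p\frac{n-2m}{2}}$, so that for each $i \in \{0,1,\ldots,m-1\}$ one has $-\Delta u_i = u_{i+1}$ classically with $u_i \geq 0$ and $u_{i+1} \geq 0$; in particular each $u_i$ is a nonnegative superharmonic function on $\mathbb{R}^n$.

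By the Riesz decomposition, each such $u_i$ can be written as
$$u_i(x) = c_n\int_{\mathbb{R}^n}\frac{u_{i+1}(y)}{|x-y|^{n-2}}\,dy + h_i(x),$$
where the Newtonian potential is finite a.e.\ and $h_i$ is a nonnegative harmonic function on $\mathbb{R}^n$, hence a constant $A_i \geq 0$ by Liouville's theorem. Concretely, I would verify this by starting from the ball mean-value identity
$$u_i(x) = \overline{u}_i(R;x) + c_n\!\int_{B_R(x)}\!\Bigl(\tfrac{1}{|x-y|^{n-2}} - \tfrac{1}{R^{n-2}}\Bigr)u_{i+1}(y)\, dy,$$
with $\overline{u}_i(R;x) := \tfrac{1}{|\partial B_R|}\int_{\partial B_R(x)} u_i\, d\sigma$ the spherical average, observing that $R\mapsto \overline{u}_i(R;x)$ is nonincreasing (because $u_{i+1}\geq 0$) and nonnegative, so it has a finite limit, and then passing to $R\to\infty$.

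It remains to show that $A_i = 0$ for every $i$, which I would do by a short downward induction. For $i=0$: if $A_0 > 0$ then $u(x) \geq A_0 > 0$ everywhere, contradicting the decay $u(x) \leq C_n(1+|x|^2)^{m-n/2}\to 0$ from \eqref{eq1}; hence $A_0 = 0$. Having $A_0 = 0$ yields $u(x) = c_n\int \tfrac{u_1(y)}{|x-y|^{n-2}}dy < \infty$; if then $A_1 > 0$, one would have $u_1 \geq A_1 > 0$, forcing $u(x) \geq c_n A_1\int_{\mathbb{R}^n} \tfrac{dy}{|x-y|^{n-2}} = +\infty$, which is absurd. Iterating this reasoning up the tower shows that every $A_i$ vanishes, which is precisely system \eqref{bi3*}.

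The main obstacle I foresee is the careful passage to the limit $R\to\infty$ in the ball mean-value identity: one must verify that the correction term $\tfrac{c_n}{R^{n-2}}\int_{B_R(x)} u_{i+1}(y)\,dy$ either vanishes or is absorbed as an $x$-independent contribution into the harmonic remainder, and that monotone convergence of the Riesz piece produces a finite potential. These analytic points, combined with the single pointwise decay of $u$ provided by \eqref{eq1} which anchors the induction, are what make the whole tower of Newtonian representations collapse cleanly to the stated integral system.
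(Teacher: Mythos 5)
Your proposal is correct and follows essentially the same route as the paper: obtain, for each level $i$, a Newtonian potential representation plus a nonnegative harmonic remainder (which Liouville forces to be a constant), and then kill the constants inductively using the decay $u\le C_n(1+|x|^2)^{m-n/2}$ at the base and the divergence of $\int_{\mathbb{R}^n}|x-y|^{2-n}\,dy$ at higher levels. The paper builds the representation via the Dirichlet Green's function on $B_R$ and the maximum principle, whereas you use the equivalent mean-value identity at the center (i.e.\ the Riesz decomposition), but these are two presentations of the same argument and the elimination of the additive constants is identical.
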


\begin{proof}
We first prove that for any $x\in \mathbb{R}^n$, $$u(x)=c_{n}\int_{\mathbb{R}^n}\frac{1}{|x-y|^{n-2}}u_1(y)dy.$$
To do this, we start with showing that
$$c_{n}\int_{\mathbb{R}^n}\frac{1}{|x-y|^{n-2}}u_1(y)dy<+\infty.$$
Set $v_{R}(x)=\int_{B_{R}}G_{R}(x,y)u_{1}(y)dy$, where
$G_{R}(x,y)$ is the Green function of Laplace operator of the ball $B_{R}$ with the Dirichlet boundary.
Obviously, $v_{R}$ is continuous and satisfies
\begin{equation*}\begin{cases}
-\Delta v_R=u_1,\ \ x\in B_{R},\\
~v_{R}(x)=0,\ \ \ x\in \partial B_{R}.
\end{cases}\end{equation*}
Let $w_{R}=u-v_R$, then
\begin{equation*}\begin{cases}
-\Delta w_R=0,\ \ x\in B_{R},\\
~w_{R}(x)\geq0,\ \ x\in \partial B_{R}.
\end{cases}\end{equation*}
According to the maximum principle, we know that
$$u(x)\geq v_{R}(x)=\int_{B_{R}}G_{R}(x,y)u_1(y)dy$$ for
any $x\in B_{R}$ and $R\geq 0$.
Letting $R\rightarrow+\infty$, we arrive at
$$u(x)\geq c_{n}\int_{\mathbb{R}^n}\frac{u_1(y)}{|x-y|^{n-2}}dy.$$
Now we let $\tilde{v}(x)=c_{n}\int_{\mathbb{R}^n}\frac{u_1(y)}{|x-y|^{n-2}}dy$, then $\tilde{v}(x)$ satisfies $-\Delta v=u_1$ in $\mathbb{R}^n$. Set $w=u-\tilde{v}$, we derive
\begin{equation*}\begin{cases}
-\Delta w=0,\ x\in \mathbb{R}^n,\\
w=u-\tilde{v}\geq 0.
\end{cases}\end{equation*}
Then it follows from the classical Liouville-type theorem that $u(x)=\tilde{v}(x)+c_0$ for some $c_0\geq 0$. From the second inequality in \eqref{eq1}, we know that $\lim_{|x|\rightarrow+\infty}u(x)=0$. Combining the above estimate, we get $c_0=0$ and then
$$u=\tilde{v}=c_{n}\int_{\mathbb{R}^n}\frac{u_1(y)}{|x-y|^{n-2}}dy.$$
Next, we prove that
$$u_1(x)=c_{n}\int_{\mathbb{R}^n}\frac{u_{2}(y)}{|x-y|^{n-2}}dy,\  x\in \mathbb{R}^n.$$
Similar to the proof of integral expression of $u$, we can use the Green representation formula, maximum principle and Liouville-type theorem for non-negative harmonic function to obtain that $$u_1(x)=c_{n}\int_{\mathbb{R}^n}\frac{u_{2}(y)}{|x-y|^{n-2}}dy+c_0$$ for some $c_0\geq 0$. We claim that $c_0=0$. Suppose not, then $u_1(x)\geq c_0$ and $\tilde{v}(x)=\int_{\mathbb{R}^n}\frac{u_1(y)}{|x-y|^{n-2}}dy=+\infty$, which is in contradiction with the fact $u=\tilde{v}=c_{n}\int_{\mathbb{R}^n}\frac{u_1(y)}{|x-y|^{n-2}}dy$. The proof for the integral expression of $u_{i}~(2\leq i\leq m-1)$ is similar to the proof of integral expression of $u_{1}$. Hence Lemma \ref{lem1} is proved.
\end{proof}

If we can prove that $u_i\geq 0$ for $i=1,\cdot\cdot\cdot, m-1$, then according to Lemma \ref{lem1}, we can write $u(x)$ as follows
\begin{align*}
u(x)=c_n^{m}\int_{\mathbb{R}^{n}}\cdot\cdot\cdot \int_{\mathbb{R}^n}&\frac{1}{|x-y_1|^{n-2}}\frac{1}{|y_1-y_2|^{n-2}}\cdot\cdot\cdot \frac{u^p(y_{m})}{|y_{m-1}-y_m|^{n-2}} \\
&\cdot\big(\frac{2}{1+|y_m|^2}\big)^{\frac{n+2m}{2}-p\frac{n-2m}{2}}dy_1\cdot\cdot\cdot dy_{m}.
\end{align*}
Exchanging the order of integration and using the fact (see Stein \cite{Stein}) $$\int_{\mathbb{R}^n}\int_{\mathbb{R}^n}\frac{1}{|y_i-y_{i+1}|^{n-2}}\frac{1}{|y_{i+1}-y_{i+2}|^{n-2}}dy_{i+1}dy_{i+2}=\frac{\pi^{\frac{n}{2}}\Gamma(\frac{n}{2}-2)}{\Gamma^2(\frac{n}{2}-1)}\int_{\mathbb{R}^n}\frac{1}{|y_i-y_{i+2}|^{n-4}}dy_{i+2},$$ we derive that up to some constant,
\begin{equation}\label{int1}
u(x)=\int_{\mathbb{R}^n}\frac{u^p(y)}{|x-y|^{n-2m}}\big(\frac{2}{1+|y|^2}\big)^{\frac{n+2m}{2}-p\frac{n-2m}{2}}dy.
\end{equation}

According to the above analysis, in order to obtain the desired integral expression \eqref{int1} for any nonnegative solution of the high-order elliptic equation \eqref{eq1}, we only need to prove that the any nonegative solution of equation \eqref{eq1} has the super poly-harmonic property. It is well-known that spherical mean is an useful tool in proving the super poly-harmonic properties of non-negative solutions to high-order elliptic equation. (see e.g. Lu, Wei and Xu \cite{LWX}.) We will apply this strategy together with the re-centers and iteration arguments to prove super poly-harmonic properties. Now we start the proof of super poly-harmonic property of equation \eqref{eq1}, namely we will prove that $u_i\geq 0$ for $i=1,2,\cdot\cdot\cdot, m-1$. In particular, we remark that from now on, for simplicity, we will omit the constant $2^{\frac{n+2m}{2}-p\frac{n-2m}{2}}$.
\medskip

\textbf{The proof of the super poly-harmonic properties:} We first prove that $u_{m-1}\geq0$. We argue this by contradiction.
Suppose that there exists $x^{1}\in\mathbb{R}^{n}$ such that
\begin{align}\label{cont-point}
u_{m-1}(x^{1})<0,
\end{align}
we will deduce a contradiction by the following four steps.
\vskip0.1cm

\textbf{Step 1}. Let
$$\bar{u}(r):=\frac{1}{|\partial B_{r}(x^{1})|}\int_{\partial B_{r}(x^{1})}u(x)d\sigma, \ \
\bar{u}_{i}(r):=\frac{1}{|\partial B_{r}(x^{1})|}\int_{\partial B_{r}(x^{1})}u_{i}(x)d\sigma,$$
be the spherical average of $u$ and $u_{i}~(i=1,2,\ldots,m-1)$ with respect to the center $x^{1}$, where $d\sigma$ denotes the surface measure on $\partial B_{r}(x^{1})$. Then for $r>0$, we have
\begin{equation}\label{Lap-sph}
\begin{cases}
-\Delta\bar{u}=\bar{u}_{1},  \\
-\Delta\bar{u}_{1}=\bar{u}_{2},  \\
\ \ \ \vdots \\
-\Delta\bar{u}_{m-2}=\bar{u}_{m-1}, \\
-\Delta\bar{u}_{m-1}=\overline{\Big\{\frac{u^{p}(x)}{(1+|x|^{2})^{\frac{n+2m}{2}-p\frac{n-2m}{2}}}\Big\}}.
\end{cases}
\end{equation}
From the last equality in \eqref{Lap-sph} coupled with the Jensen's inequality, we obtain that for any $r>0$,
\begin{align}\label{non-est}
-\Delta\bar{u}_{m-1}(r)=&\frac{1}{|\partial B_{r}(x^{1})|}\int_{\partial B_{r}(x^{1})}\frac{u^{p}(x)}
{(1+|x|^{2})^{\frac{n+2m}{2}-p\frac{n-2m}{2}}}d\sigma  \nonumber\\
\geq&\frac{1}{(1+(r+|x^{1}|)^{2})^{\frac{n+2m}{2}-p\frac{n-2m}{2}}}\cdot\frac{1}{|\partial B_{r}(x^{1})|}\int_{\partial B_{r}(x^{1})}u^{p}(x)d\sigma  \nonumber\\
\geq&\frac{1}{(1+(r+|x^{1}|)^{2})^{\frac{n+2m}{2}-p\frac{n-2m}{2}}}\Big(\frac{1}{|\partial B_{r}(x^{1})|}\int_{\partial B_{r}(x^{1})}u(x)d\sigma\Big)^{p}  \nonumber\\
=&\frac{\bar{u}^{p}(r)}{(1+(r+|x^{1}|)^{2})^{\frac{n+2m}{2}-p\frac{n-2m}{2}}}
\geq0,
\end{align}
which implies
\begin{align}\label{equ-der-est}
-\frac{1}{r^{n-1}}(r^{n-1}\bar{u}_{m-1}'(r))'=-\Delta\bar{u}_{m-1}(r)\geq0, \ \ \forall \ r>0,
\end{align}
consequently, $\bar{u}'_{m-1}(r)\leq0$ for all $r>0$. Then by \eqref{cont-point} and using the Lebesgue differential theorem, we derive that
\begin{align}\label{m-1-qua-est}
\bar{u}_{m-1}(r)\leq\bar{u}_{m-1}(0)=u_{m-1}(x^{1}):=-c_{0}<0, \ \ \forall \ r\geq0.
\end{align}
This together with the relation $-\Delta\bar{u}_{m-2}=\bar{u}_{m-1}$ gives that
\begin{align*}
-\frac{1}{r^{n-1}}(r^{n-1}\bar{u}_{m-2}'(r))'=\bar{u}_{m-1}(r)\leq-c_{0}, \ \ \forall \ r\geq0.
\end{align*}
Integrating both sides of the above inequality, we obtain $\bar{u}_{m-2}'(r)\geq\frac{c_{0}}{n}r$, then integrating again yields
\begin{align}\label{iter-qua-m-2-est}
\bar{u}_{m-2}(r)\geq\bar{u}_{m-2}(0)+\frac{c_{0}}{2n}r^{2}, \ \forall ~r\geq0.
\end{align}
Thus, we can find a large $r_{1}>0$ such that
\begin{align}\label{new-iter-poi}
\bar{u}_{m-2}(r_{1})\geq\frac{c_{0}}{4n}r_{1}^{2}:=c_{1}>0.
\end{align}

\textbf{Step 2}. Take a point $x^{2}$ with $|x^{2}-x^{1}|=r_{1}$ as the new center, and make spherical average of $\bar{u}$ and $\bar{u}_{i}$ at the new center $x^{2}$, i.e.,
$$\bar{\bar{u}}(r):=\frac{1}{|\partial B_{r}(x^{2})|}\int_{\partial B_{r}(x^{2})}\bar{u}(|x-x^{1}|)d\sigma,$$
and
$$\bar{\bar{u}}_{i}(r):=\frac{1}{|\partial B_{r}(x^{2})|}\int_{\partial B_{r}(x^{2})}\bar{u}_{i}(|x-x^{1}|)d\sigma,$$
where $i=1,2,\ldots,m-1$. Then similar to \eqref{Lap-sph}, we have that $(\bar{\bar{u}},\bar{\bar{u}}_{1},\cdots,\bar{\bar{u}}_{m-1})$ still satisfies
\begin{equation}\label{sec-sph-aver}
\begin{cases}
-\Delta \bar{\bar{u}}=\bar{\bar{u}}_{1},  \\
-\Delta\bar{\bar{u}}_{1}=\bar{\bar{u}}_{2},  \\
\ \ \ \vdots \\
-\Delta\bar{\bar{u}}_{m-2}=\bar{\bar{u}}_{m-1}, \\
-\Delta\bar{\bar{u}}_{m-1}=\overline{\overline{\Big\{\frac{u^{p}(x)}{(1+|x|^{2})^{\frac{n+2m}{2}-p\frac{n-2m}{2}}}\Big\}}}.
\end{cases}
\end{equation}

Clearly, from the last equality in \eqref{sec-sph-aver} together with the definition of the spherical average, we can easily obtain that $-\Delta\bar{\bar{u}}_{m-1}(r)\geq0$ for all $r>0$, which yields that
$$-\frac{1}{r^{n-1}}(r^{n-1}\bar{\bar{u}}'_{m-1}(r))'\geq0.$$
As a consequence, $\bar{\bar{u}}'_{m-1}(r)\leq0$ and by \eqref{m-1-qua-est}, we get
\begin{equation}\label{siter-est-m-1}
\bar{\bar{u}}_{m-1}(r)\leq\bar{\bar{u}}_{m-1}(0)=\bar{u}_{m-1}(|x^{2}-x^{1}|)=\bar{u}_{m-1}(r_{1})
\leq-c_{0}<0, \ \ \forall \ r\geq0.
\end{equation}
On the other hand, notice that $-\Delta\bar{\bar{u}}_{m-2}=\bar{\bar{u}}_{m-1}$, then by \eqref{siter-est-m-1} it follows that
$$-\frac{1}{r^{n-1}}(r^{n-1}\bar{\bar{u}}_{m-2}'(r))=\bar{\bar{u}}_{m-1}(r)\leq-c_{0},$$
which gives that
\begin{equation}\label{siter-est-m-2}
\bar{\bar{u}}_{m-2}(r)\geq\bar{\bar{u}}_{m-2}(0)+\frac{c_{0}}{2n}r^{2},
\end{equation}
for all $r\geq0$. In particular, from \eqref{new-iter-poi}, we know that
\begin{equation}\label{regu-sec}
\bar{\bar{u}}_{m-2}(0)=\bar{u}_{m-2}(|x^{2}-x^{1}|)=\bar{u}_{m-2}(r_{1})\geq\frac{c_{0}}{4n}r_{1}^{2}
=\frac{c_{0}}{4n}|x^{2}-x^{1}|^{2}=c_{1}>0.
\end{equation}

Combining \eqref{siter-est-m-2}, \eqref{regu-sec} and the relation $-\Delta\bar{\bar{u}}_{m-3}=\bar{\bar{u}}_{m-2}$, we deduce that
$$-\frac{1}{r^{n-1}}(r^{n-1}\bar{\bar{u}}'_{m-3}(r))'=\bar{\bar{u}}_{m-2}(r)\geq\bar{\bar{u}}_{m-2}(0)\geq
c_{1}>0,$$
which implies that
\begin{align*}
\bar{\bar{u}}_{m-3}(r)\leq\bar{\bar{u}}_{m-3}(0)-\frac{c_{1}}{2n}r^{2}, \ \forall~r\geq0.
\end{align*}
Therefore, there exists a large $r_{2}>0$ such that
\begin{align}\label{thnew-iter-poi}
\bar{\bar{u}}_{m-3}(r_{2}):=-c_{2}<0.
\end{align}

\textbf{Step 3}. Following the same argument as what we did in \textbf{Step 2}, we can take a point $x^{3}$ with $|x^{3}-x^{2}|=r_{2}$ as the new center and define the spherical average of $\bar{\bar{u}}$ and $\bar{\bar{u}}_{i}~(i=1,2,\ldots,m-1)$ by re-centers:
$$\bar{\bar{\bar{u}}}(r):=\frac{1}{|\partial B_{r}(x^{3})|}\int_{\partial B_{r}(x^{3})}\bar{\bar{u}}(|x-x^{2}|)d\sigma,$$
$$\bar{\bar{\bar{u}}}_{i}(r):=\frac{1}{|\partial B_{r}(x^{3})|}\int_{\partial B_{r}(x^{3})}\bar{\bar{u}}_{i}(|x-x^{2}|)d\sigma.$$
Then it follows from \eqref{thnew-iter-poi} that
\begin{align}\label{thiter-ori-val}
\bar{\bar{\bar{u}}}_{m-3}(0)=\bar{\bar{u}}_{m-3}(|x^{3}-x^{2}|)=\bar{\bar{u}}_{m-3}(r_{2})=-c_{2}<0.
\end{align}

Once again, $\bar{\bar{\bar{u}}}$ and $\bar{\bar{\bar{u}}}_{i}~(i=1,2,\ldots,m-1)$ satisfy entirely similar equations as $(\bar{\bar{u}},\bar{\bar{u}}_{1},\cdots,\bar{\bar{u}}_{m-1})$~(see \eqref{sec-sph-aver}). Applying a similar argument as previous we did, we can obtain that
\begin{equation}\label{th-iter-est}
\begin{cases}
~\bar{\bar{\bar{u}}}_{m-1}(r)\leq\bar{\bar{\bar{u}}}_{m-1}(0)<0, \\[2mm]
~\bar{\bar{\bar{u}}}_{m-2}(r)\geq\bar{\bar{\bar{u}}}_{m-2}(0)>0, \\[2mm]
~\bar{\bar{\bar{u}}}_{m-3}(r)\leq\bar{\bar{\bar{u}}}_{m-3}(0)<0,
\end{cases}
\end{equation}
for all $r\geq0$. For the reader's convenience to have a better understanding of the iteration process and for the later use of the induction, we give a brief proof for \eqref{th-iter-est} here.
\vskip0.1cm

First of all, by the fact $-\Delta\bar{\bar{\bar{u}}}_{m-1}(r)\geq0$ for $r>0$, we have $\bar{\bar{\bar{u}}}'_{m-1}(r)\leq0$ holds for all $r>0$. This result together with \eqref{siter-est-m-1} imply that
\begin{align*}
\bar{\bar{\bar{u}}}_{m-1}(r)\leq\bar{\bar{\bar{u}}}_{m-1}(0)=\bar{\bar{u}}_{m-1}(|x^{3}-x^{2}|)=
\bar{\bar{u}}_{m-1}(r_{2})\leq\bar{\bar{u}}_{m-1}(0)\leq-c_{0}<0.
\end{align*}

Then via the relation $-\Delta\bar{\bar{\bar{u}}}_{m-2}=\bar{\bar{\bar{u}}}_{m-1}$ and integrating both sides of the above inequality twice, we derive that
\begin{align}\label{thir-even}
\bar{\bar{\bar{u}}}_{m-2}(r)\geq\bar{\bar{\bar{u}}}_{m-2}(0)+\frac{c_{0}}{2n}r^{2},
\end{align}
where $\bar{\bar{\bar{u}}}_{m-2}(0)=\bar{\bar{u}}_{m-2}(|x^{3}-x^{2}|)=\bar{\bar{u}}_{m-2}(r_{2})\geq
\bar{\bar{u}}_{m-2}(0)+\frac{c_{0}}{2n}r_{2}^{2}>0.$

Finally, using the relation $-\Delta\bar{\bar{\bar{u}}}_{m-3}=\bar{\bar{\bar{u}}}_{m-2}$ together with \eqref{thiter-ori-val} and \eqref{thir-even}, one has $\bar{\bar{\bar{u}}}'_{m-3}(r)\leq0$ and then
\begin{align}\label{thir-new-st}
\bar{\bar{\bar{u}}}_{m-3}(r)\leq\bar{\bar{\bar{u}}}_{m-3}(0)=-c_{2}<0.
\end{align}
This completes the proof of \eqref{th-iter-est}. Furthermore, we point out that by the relation $-\Delta\bar{\bar{\bar{u}}}_{m-4}=\bar{\bar{\bar{u}}}_{m-3}$ together with \eqref{thir-new-st}, we can similarly obtain that
$$\bar{\bar{\bar{u}}}_{m-4}(r)\geq\bar{\bar{\bar{u}}}_{m-4}(0)+\frac{c_{2}}{2n}r^{2}, \ \ \forall \ r\geq0.$$
Hence, we can find a large $r_{3}>0$ such that
\begin{align}\label{thir-ind}
\bar{\bar{\bar{u}}}_{m-4}(r_{3})\geq\frac{c_{2}}{4n}r_{3}^{2}:=c_{3}>0,
\end{align}
and then we can start a new re-center and iteration argument for $\bar{\bar{\bar{\bar{u}}}}$ and $\bar{\bar{\bar{\bar{u}}}}_{i}$.
\medskip

In general, after $m$ steps of re-centers process (denotes the centers by $x^{1},x^{2},\cdots,x^{m}$), if we denote the $m$-th spherical average of $u$ and $u_{i}$ by $\tilde{u}$ and $\tilde{u}_{i}$ respectively, then by induction we can deduce that, for $r\geq0$,
\begin{equation}\label{mth-iter-equ}
\begin{cases}
~-\Delta \tilde{u}_{m-1}(r)=\big\{\frac{u^{p}(x)}{(1+|x|^{2})^{\frac{n+2m}{2}-p\frac{n-2m}{2}}}\big\}^{\thicksim}(r)\geq0, \\[2mm]
~(-1)^{i}\tilde{u}_{m-i}(r)\geq (-1)^{i}\tilde{u}_{m-i}(0)>0, \\[2mm]
~(-1)^{m}\tilde{u}(r)\geq (-1)^{m}\tilde{u}(0)>0,
\end{cases}
\end{equation}
where $i=1,2,\ldots,m-1$.

\medskip

\textbf{Step 4}. We claim that $m$ must be even. Otherwise, if $m$ is odd, the last inequality in \eqref{mth-iter-equ} implies immediately that
$$\tilde{u}(r)\leq\tilde{u}(0)<0,$$
which contradicts the hypothesis $u\geq0$.
\medskip

Consequently, by the above ``re-centers and iteration" process~\big(see \eqref{new-iter-poi}, \eqref{regu-sec} and \eqref{thir-ind}\big) and the fact that $m$ is even, we conclude that
\begin{align}\label{mth-iter-est}
\tilde{u}(0)\geq \frac{C}{4n}|x^{m}-x^{m-1}|^{2},
\end{align}
with some constant $C>0$ independent of $x^{m}$. Choosing $x^{m}\in\mathbb{R}^{n}$ such that $|x^{m}|$ sufficiently large, then for all $r\geq0$, we can obtain that
\begin{align}\label{indu-cont}
\tilde{u}(r)\geq\tilde{u}(0)>C_{n}.
\end{align}

On the other hand, from the second inequality in \eqref{eq1}, we know that $u(x)\leq C_{n}$ for all $x\in \mathbb{R}^{n}$, which yields
$$\tilde{u}(r)\leq C_{n}$$
for all $r\geq0$, this contradicts \eqref{indu-cont}. Therefore, $u_{m-1}\geq0$ must hold and we completes the proof for $u_{m-1}\geq 0$.
\vskip 0.2cm

Similarly, we could also prove that $u_{m-i}\geq0$ for $i=2,3,\cdots,m-1$.
Suppose on the contrary, there exists some $2\leq i\leq m-1$ and some $x^{0}\in\mathbb{R}^{n}$ such that
\begin{equation}\label{cont-mid}
\begin{cases}
~u_{m-1}(x)\geq0, \ u_{m-2}(x)\geq0, \cdots, u_{m-i+1}(x)\geq0, \ \forall~x\in\mathbb{R}^{n}, \\[2mm]
~~u_{m-i}(x^{0})<0.
\end{cases}
\end{equation}

Repeating the similar ``re-centers and iteration" process as above, after $m-i+1$ steps of re-centers (denotes the centers by $\bar{x}^{1},\bar{x}^{2},\cdots,\bar{x}^{m-i+1}$), the signs of $\tilde{u}_{m-i+1-j}~(j=1,\cdots,m-i)$ and $\tilde{u}$ satisfy
\begin{equation}\label{reiter-cont-mid}
\begin{cases}
~(-1)^{j}\tilde{u}_{m-i+1-j}(r)\geq(-1)^{j}\tilde{u}_{m-i+1-j}(0)>0, \ \ j=1, \cdots,m-i,\\[2mm]
~~(-1)^{m-i+1}\tilde{u}(r)\geq(-1)^{m-i+1}\tilde{u}(0)>0,
\end{cases}
\end{equation}
for all $r\geq0$. Since $u\geq0$, it follows from \eqref{reiter-cont-mid} that $m-i+1$ is even and
$$\tilde{u}(r)\geq\tilde{u}(0)>0, \ \forall~r\geq0.$$
Moreover, since $m-i$ is odd, then from \eqref{reiter-cont-mid}, we get
$$-\Delta\tilde{u}(r)=\tilde{u}_{1}(r)\leq\tilde{u}_{1}(0)=:-\tilde{c}<0, \ \forall~r\geq0.$$
Integrating both sides of the above inequality twice, we derive that
\begin{align}\label{reiter-est-cont}
\tilde{u}(r)\geq\tilde{u}(0)+\frac{\tilde{c}}{2n}r^{2}\geq\frac{\tilde{c}}{2n}r^{2}, \ \forall~r\geq0.
\end{align}
Therefore, if we assume that \eqref{eq1} holds, we will get a contradiction from \eqref{reiter-est-cont}. As a consequence, $u_{m-i}\geq0$ for all $i=2,3,\cdots,m-1$, this together with $u_{m-1}\geq 0$ ends the proof of the super poly-harmonic properties.
\medskip

Now, we are in position to prove that any nonnegative solution of the high-order elliptic equation \eqref{eq1} in $\mathbb{R}^n$ must be radially decreasing about the origin for $1\leq p<\frac{n+2m}{n-2m}$. From the previous arguments, we know that the nonnegative solution of \eqref{eq1} satisfies the integral equation \eqref{int1}. In what follows, we only need to prove that the nonnegative solution of integral equation \eqref{int1} is radially symmetric and monotone decreasing about the origin.
\vskip0.2cm

\textbf{The proof of the symmetry:}
Our method is based on the moving plane method in integral forms and without loss of generality, we carry out the process of moving plane in the $x_{1}$-direction. Before starting the proof, we recall some definitions.

For $x\in\mathbb{R}^{n}$ and $\lambda\in\mathbb{R}$, we set
$$T_{\lambda}:=\{x=(x_{1},x_{2},\ldots,x_{n})\in\mathbb{R}^{n}: \ x_{1}=\lambda\}$$
and
$$\Sigma_{\lambda}:=\{x=(x_{1},x_{2},\ldots,x_{n})\in\mathbb{R}^{n}: \ x_{1}<\lambda\}.$$

Also, for any $\lambda\in\mathbb{R}$, we let
$$x^{\lambda}:=(2\lambda-x_{1},x_{2},\cdots,x_{n}),$$
be the reflection of the point $x=(x_{1},x_{2},\ldots,x_{n})$ about the plane $T_{\lambda}$.

Define now $u_{\lambda}(x)=u(x^{\lambda})$, then by the fact that $|x-y^{\lambda}|=|x^{\lambda}-y|$, we deduce that
\begin{align}\label{equ-resp}
u(x)=&\Big\{\int_{\Sigma_{\lambda}}+\int_{\mathbb{R}^{n}\backslash\Sigma_{\lambda}}\Big\}\frac{u^{p}(y)}
{|x-y|^{n-2m}(1+|y|^{2})^{\frac{n+2m}{2}-p\frac{n-2m}{2}}}dy \nonumber\\
=&\int_{\Sigma_{\lambda}}\frac{1}{|x-y|^{n-2m}}\frac{u^{p}(y)}{(1+|y|^{2})^{\frac{n+2m}{2}-p\frac{n-2m}{2}}}dy
\nonumber\\
&+\int_{\Sigma_{\lambda}}\frac{1}{|x-y^{\lambda}|^{n-2m}}\frac{u^{p}(y^{\lambda})}{(1+|y^{\lambda}|^{2})
^{\frac{n+2m}{2}-p\frac{n-2m}{2}}}dy \nonumber\\
=&\int_{\Sigma_{\lambda}}\frac{u^{p}(y)}
{|x-y|^{n-2m}(1+|y|^{2})^{\frac{n+2m}{2}-p\frac{n-2m}{2}}}dy \nonumber\\
&+\int_{\Sigma_{\lambda}}\frac{u^{p}(y^{\lambda})}
{|x^{\lambda}-y|^{n-2m}(1+|y^{\lambda}|^{2})^{\frac{n+2m}{2}-p\frac{n-2m}{2}}}dy.
\end{align}
Similarly,
\begin{align}\label{sym-equ-resp}
u(x^{\lambda})=&\int_{\Sigma_{\lambda}}\frac{u^{p}(y)}
{|x^{\lambda}-y|^{n-2m}(1+|y|^{2})^{\frac{n+2m}{2}-p\frac{n-2m}{2}}}dy \nonumber\\
&+\int_{\Sigma_{\lambda}}\frac{u^{p}(y^{\lambda})}
{|x-y|^{n-2m}(1+|y^{\lambda}|^{2})^{\frac{n+2m}{2}-p\frac{n-2m}{2}}}dy.
\end{align}
Combining \eqref{equ-resp} and \eqref{sym-equ-resp}, we obtain that
\begin{align}\label{coup-resp}
u(x)-u(x^{\lambda})=&\int_{\Sigma_{\lambda}}\Big(\frac{1}{|x-y|^{n-2m}}-\frac{1}{|x^{\lambda}-y|^{n-2m}}\Big)
\Big(\frac{u^{p}(y)}{(1+|y|^{2})^{\frac{n+2m}{2}-p\frac{n-2m}{2}}}  \nonumber\\
&-\frac{u^{p}(y^{\lambda})}{(1+|y^{\lambda}|^{2})^{\frac{n+2m}{2}-p\frac{n-2m}{2}}}\Big)dy.
\end{align}

Now, we are in position to prove that the nonnegative solutions of integral equation \eqref{int1} are symmetric with respect to the plane $T_{0}=\{x\in\mathbb{R}^{n}: x_{1}=0\}$.
\begin{proof} The proof is divided into the following two steps.
\medskip

\textbf{Step 1:} We show that for $\lambda$ sufficiently negative, $u_{\lambda}(x)\geq u(x)$ in $\Sigma_{\lambda}$. For this, we let $w_{\lambda}(x)=u_{\lambda}(x)-u(x)$ and define $$\Sigma_{\lambda}^{-}=\{x\in\Sigma_{\lambda}: \ w_{\lambda}(x)<0\},$$
then it suffices for us to prove that for sufficiently negative $\lambda$, $\Sigma_{\lambda}^{-}$ must be measure zero. Indeed, by \eqref{coup-resp}, for $x\in\Sigma_{\lambda}$, we get
\begin{align}\label{est-coup-res}
u(x)-u(x^{\lambda})=&\int_{\Sigma_{\lambda}}\Big(\frac{1}{|x-y|^{n-2m}}-\frac{1}{|x^{\lambda}-y|^{n-2m}}\Big)
\Big(\frac{u^{p}(y)}{(1+|y|^{2})^{\frac{n+2m}{2}-p\frac{n-2m}{2}}}  \nonumber\\
&-\frac{u^{p}(y^{\lambda})}{(1+|y^{\lambda}|^{2})^{\frac{n+2m}{2}-p\frac{n-2m}{2}}}\Big)dy  \nonumber\\
\leq&\int_{\Sigma_{\lambda}}\Big(\frac{1}{|x-y|^{n-2m}}-\frac{1}{|x^{\lambda}-y|^{n-2m}}\Big)\cdot\frac{u^{p}(y)
-u^{p}(y^{\lambda})}{(1+|y|^{2})^{\frac{n+2m}{2}-p\frac{n-2m}{2}}}dy \nonumber\\
=&\int_{\Sigma_{\lambda}}\Big(\frac{1}{|x-y|^{n-2m}}-\frac{1}{|x^{\lambda}-y|^{n-2m}}\Big)\cdot\frac{p\psi_{\lambda}(y)
^{p-1}(u(y)-u(y^{\lambda}))
}{(1+|y|^{2})^{\frac{n+2m}{2}-p\frac{n-2m}{2}}}dy \nonumber\\
\leq&C\int_{\Sigma_{\lambda}^{-}}\frac{1}{|x-y|^{n-2m}}\frac{u^{p-1}(y)(u(y)-u(y^{\lambda}))
}{(1+|y|^{2})^{\frac{n+2m}{2}-p\frac{n-2m}{2}}}dy,
\end{align}
where $\psi_{\lambda}(x)$ lies in between $u_{\lambda}(x)$ and $u(x)$ by the Mean Value Theorem. The last inequality in \eqref{est-coup-res} holds since on $\Sigma_{\lambda}^{-}$, $u_{\lambda}(x)<u(x)$ and then $\psi_{\lambda}(x)\leq u(x)$. Now, notice that for $x\in\Sigma_{\lambda}^{-}$,
\begin{align*}
0<-w_{\lambda}(x)=&u(x)-u_{\lambda}(x).
\end{align*}
Then applying the Hardy-Littlewood-Sobolev inequality to \eqref{est-coup-res}, one gets, for any $\frac{n}{n-2m}<q<\infty$,
\begin{align}\label{norm-est}
\|w_{\lambda}\|_{L^{q}(\Sigma_{\lambda}^{-})}\leq&C\Big\|\int_{\Sigma_{\lambda}^{-}}\frac{u^{p-1}(y)w_{\lambda}(y)
}{|x-y|^{n-2m}(1+|y|^{2})^{\frac{n+2m}{2}-p\frac{n-2m}{2}}}dy\Big\|_{L^{q}(\Sigma_{\lambda}^{-})}  \nonumber\\
\leq&C\Big\|\frac{u^{p-1}(y)w_{\lambda}(y)
}{(1+|y|^{2})^{\frac{n+2m}{2}-p\frac{n-2m}{2}}}\Big\|_{L^{\frac{nq}{n+2mq}}(\Sigma_{\lambda}^{-})} \nonumber\\
\leq&C\Big\|\frac{u^{p-1}(y)}{(1+|y|^{2})^{\frac{n+2m}{2}-p\frac{n-2m}{2}}}
\Big\|_{L^{\frac{n}{2m}}(\Sigma_{\lambda}^{-})}\cdot\|w_{\lambda}\|_{L^{q}(\Sigma_{\lambda}^{-})} \nonumber\\
=&C\Big\{\int_{\Sigma_{\lambda}^{-}}\Big[\frac{u^{p-1}(y)}{(1+|y|^{2})^{\frac{n+2m}{2}-p\frac{n-2m}{2}}}\Big]^
{\frac{n}{2m}}dy\Big\}^{\frac{2m}{n}}\cdot\|w_{\lambda}\|_{L^{q}(\Sigma_{\lambda}^{-})}.
\end{align}
From \eqref{eq1}, we know that
\begin{align}\label{bou-norm-est}
\int_{\mathbb{R}^{n}}\Big[\frac{u^{p-1}(y)}{(1+|y|^{2})^{\frac{n+2m}{2}-p\frac{n-2m}{2}}}\Big]^
{\frac{n}{2m}}dy\leq\int_{\mathbb{R}^{n}}\frac{1}{(1+|y|^{2})^{2m\cdot\frac{n}{2m}}}dy<\infty.
\end{align}
Thus we can choose $N$ sufficiently large, such that for $\lambda\leq-N$,
\begin{align}\label{upp-boun-est}
C\Big\{\int_{\Sigma_{\lambda}^{-}}\Big[\frac{u^{p-1}(y)}{(1+|y|^{2})^{\frac{n+2m}{2}-p\frac{n-2m}{2}}}\Big]^
{\frac{n}{2m}}dy\Big\}^{\frac{2m}{n}}\leq\frac{1}{2}.
\end{align}
Consequently, by \eqref{norm-est} and \eqref{upp-boun-est}, we deduce that $\|w_{\lambda}\|_{L^{q}(\Sigma_{\lambda}^{-})}=0$, which implies that $\Sigma_{\lambda}^{-}$ must be measure zero. This proves that for $\lambda$ sufficiently negative, $w_{\lambda}(x)\geq0$ in $\Sigma_{\lambda}$.
\medskip

\textbf{Step 2:} Move the plane to the limiting position to derive symmetry and monotonicity.
Now we start to move the plane $T_{\lambda}=\{x\in\mathbb{R}^{n}: x_{1}=\lambda\}$ to the right as long as $w_{\lambda}(x)\geq 0$ holds in $\Sigma_{\lambda}$ and define
\begin{align}\label{sup-mov-pla}
\lambda_{0}=\sup\{\lambda: w_{\mu}(x)\geq0, \ \forall~x\in\Sigma_{\mu}, \ \mu\leq\lambda\}.
\end{align}
From the \textbf{Step 1}, we know that $\lambda_0$ is well defined. In addition, according to the definition of $\lambda_0$, one can easily get $w_{\lambda_0}(x)\geq 0$ in $\Sigma_{\lambda_0}$.
Now, we claim that $\lambda_0\geq 0$. To prove it, we argue by contradiction and suppose that $\lambda_{0}<0$. In this case, we first show that
\begin{align}\label{hyp-ind}
w_{\lambda_{0}}(x)\equiv0 \  \  in \ \Sigma_{\lambda_{0}}.
\end{align}
If not, then by the definition of $\lambda_{0}$, we can assume that $w_{\lambda_{0}}(x)\geq0$, but $w_{\lambda_{0}}(x)\not\equiv0$ in $\Sigma_{\lambda_{0}}$. In particular, we remark that under this assumption, we have in fact $w_{\lambda_{0}}(x)>0$ in the interior of $\Sigma_{\lambda_{0}}$.
Indeed, if there exists $x_{1}\in\Sigma_{\lambda_{0}}$ such that $w_{\lambda_{0}}(x_{1})=0$, then by \eqref{coup-resp}, we have
\begin{align*}
0=w_{\lambda_{0}}(x_{1})=&\int_{\Sigma_{\lambda_{0}}}\Big(\frac{1}{|x_{1}-y|^{n-2m}}-\frac{1}{|x_{1}^{\lambda_{0}}
-y|^{n-2m}}\Big)\Big(\frac{u^{p}(y^{\lambda_{0}})}{(1+|y^{\lambda_{0}}|^{2})^{\frac{n+2m}{2}-p\frac{n-2m}{2}}}  \\
&-\frac{u^{p}(y)}{(1+|y|^{2})^{\frac{n+2m}{2}-p\frac{n-2m}{2}}}\Big)dy \\
\geq&\int_{\Sigma_{\lambda_{0}}}\Big(\frac{1}{|x_{1}-y|^{n-2m}}-\frac{1}{|x_{1}^{\lambda_{0}}
-y|^{n-2m}}\Big)\frac{u^{p}(y^{\lambda_{0}})-u^{p}(y)}{(1+|y|^{2})^{\frac{n+2m}{2}-p\frac{n-2m}{2}}}dy.
\end{align*}

On the other hand, by the hypothesis on $w_{\lambda_{0}}$, we know that there exists at least one point $x_{2}\in\Sigma_{\lambda_{0}}$ such that
$$w_{\lambda_{0}}(x_{2})=u_{\lambda_{0}}(x_{2})-u(x_{2})>0.$$
Then by the uniform continuity of $u$ and $u_{\lambda_{0}}$, we can find a small neighborhood $B_{r}(x_{2})\subset\Sigma_{\lambda_{0}}$ such that $w_{\lambda_{0}}(x)>0$ holds for all $x\in B_{r}(x_{2})$. As a consequence, we deduce that
\begin{align*}
0=w_{\lambda_{0}}(x_{1})\geq&\int_{B_{r}(x_{2})}\Big(\frac{1}{|x_{1}-y|^{n-2m}}  \\
&-\frac{1}{|x_{1}^{\lambda_{0}}
-y|^{n-2m}}\Big)\frac{u^{p}(y^{\lambda_{0}})-u^{p}(y)}{(1+|y|^{2})^{\frac{n+2m}{2}-p\frac{n-2m}{2}}}dy>0,
\end{align*}
this is impossible and the conclusion $w_{\lambda_{0}}(x)>0$ in $\Sigma_{\lambda_{0}}$ follows.
\vskip0.2cm

In what follows we show that the plane $T_{\lambda}$ can be moved a little further to the right provided that $w_{\lambda_{0}}(x)>0$ in $\Sigma_{\lambda_{0}}$, more precisely, we prove that there exists some $\bar{\varepsilon}>0$ small enough such that for any $0\leq\varepsilon\leq\bar{\varepsilon}$, $w_{\lambda}(x)\geq0$ holds in $\Sigma_{\lambda}$ for all $\lambda\in[\lambda_{0},\lambda_{0}+\varepsilon)$, which will contradict with the definition of $\lambda_0$.
In order to prove it, we shall follow the arguments used in \textbf{Step 1}, that is, we check that \eqref{upp-boun-est} holds and then obtain $\|w_{\lambda}\|_{L^{q}(\Sigma_{\lambda}^{-})}=0$ for all $\lambda\in [\lambda_{0},\lambda_{0}+\varepsilon)$.
\vskip 0.2cm

We now prove that the inequality in \eqref{upp-boun-est} holds for $\lambda\in[\lambda_{0},\lambda_{0}+\varepsilon)$. First of all, from \eqref{bou-norm-est}, we can choose $R$ sufficiently large such that
\begin{align}\label{unbou-tru-est}
\int_{\mathbb{R}^{n}\backslash B_{R}(0)}\Big[\frac{u^{p-1}(y)}{(1+|y|^{2})^{\frac{n+2m}{2}-p\frac{n-2m}{2}}}\Big]^
{\frac{n}{2m}}dy<\frac{1}{4C}.
\end{align}
Fix this $R$, then we only need to show that the measure of $\Sigma_{\lambda}^{-}\cap B_{R}(0)$ is sufficiently small as $\lambda$ close to $\lambda_{0}$.

To verify it, for $\lambda>\lambda_{0}$ and $\delta>0$, we define
$$E_{\delta}=\{x\in\Sigma_{\lambda_{0}}\cap B_{R}(0): \ w_{\lambda_{0}}(x)>\delta\},$$
$$F_{\delta}=(\Sigma_{\lambda_{0}}\cap B_{R}(0))\backslash E_{\delta}, \ \ D_{\lambda}=(\Sigma_{\lambda}\backslash\Sigma_{\lambda_{0}})\cap B_{R}(0).$$
Then it is easy to see that
\begin{align}\label{con-mea}
(\Sigma_{\lambda}^{-}\cap B_{R}(0))\subset (\Sigma_{\lambda}^{-}\cap E_{\delta})\cup F_{\delta}\cup D_{\lambda},
\end{align}
and
\begin{align}\label{inf-str-mea}
\lim_{\lambda\rightarrow\lambda_{0}^{+}}\mu(D_{\lambda})=0.
\end{align}
Moreover, by the fact that $w_{\lambda_{0}}(x)>0$ in $\Sigma_{\lambda_{0}}$, we can obtain that
\begin{align}\label{inf-mea-val}
\lim_{\delta\rightarrow0}\mu(F_{\delta})=0.
\end{align}

Now, for any fixed $\eta>0$, we can choose a $\delta>0$ small enough such that $\mu(F_{\delta})\leq\eta$. Fix this $\delta$, next we prove that the measure of $\Sigma_{\lambda}^{-}\cap E_{\delta}$ can also be sufficiently small as $\lambda$ close to $\lambda_{0}$. In fact, for arbitrary $x\in\Sigma_{\lambda}^{-}\cap E_{\delta}$, we have
$$0>w_{\lambda}(x)=u(x^{\lambda})-u(x^{\lambda_{0}})+u(x^{\lambda_{0}})-u(x),$$
which implies that
$$u(x^{\lambda_{0}})-u(x^{\lambda})>u(x^{\lambda_{0}})-u(x)>\delta.$$
Then it follows that
\begin{align}\label{con-set}
(\Sigma_{\lambda}^{-}\cap E_{\delta})\subset G_{\delta}:=\{x\in B_{R}(0): \ u(x^{\lambda_{0}})-u(x^{\lambda})>\delta\}.
\end{align}
By the well known Chebyshev inequality, we have
\begin{align}\label{Che}
\mu(G_{\delta})\leq&\frac{1}{\delta^{r}}\int_{G_{\delta}}|u(x^{\lambda_{0}})-u(x^{\lambda})|^{r}dx \nonumber\\
\leq&\frac{1}{\delta^{r}}\int_{B_{R}(0)}|u(x^{\lambda_{0}})-u(x^{\lambda})|^{r}dx
\end{align}
for any $1\leq r<\frac{n}{n-2m}$. Hence, for the fixed $\delta$, as $\lambda\rightarrow\lambda_{0}^{+}$, the right hand of the above inequality can be made as small as we wish.

Therefore, by \eqref{con-mea}, \eqref{inf-str-mea}, \eqref{inf-mea-val}, \eqref{con-set} and \eqref{Che}, we obtain
$$\lim_{\lambda\rightarrow\lambda_{0}^{+}}\mu(\Sigma_{\lambda}^{-}\cap B_{R}(0))\leq\mu(F_{\delta})\leq\eta.$$
Since $\eta>0$ is arbitrarily chosen, this gives
\begin{align}\label{zer-lim-mea}
\lim_{\lambda\rightarrow\lambda^{+}_{0}}\mu(\Sigma_{\lambda}^{-}\cap B_{R}(0))=0.
\end{align}

Combining \eqref{unbou-tru-est} and \eqref{zer-lim-mea}, we arrive at \eqref{upp-boun-est} for $\lambda\in[\lambda_{0},\lambda_{0}+\varepsilon)$ with $\varepsilon>0$ sufficiently small.
As a consequence, there exists some $\bar{\varepsilon}>0$ small enough such that for all $0\leq\varepsilon\leq\bar{\varepsilon}$ and $\lambda\in[\lambda_{0},\lambda_{0}+\varepsilon)$, $\|w_{\lambda}\|_{L^{q}(\Sigma_{\lambda}^{-})}=0$. Thus $\mu(\Sigma_{\lambda}^{-})=0$, i.e.,
$$w_{\lambda}(x)\geq0, \ a.e. ~\forall \ x\in\Sigma_{\lambda}.$$
This is a contradiction with \eqref{sup-mov-pla}, therefore \eqref{hyp-ind} must hold. Now, by \eqref{coup-resp}, \eqref{hyp-ind} and the assumption that $\lambda_{0}<0$, we can write for any $x\in\Sigma_{\lambda_{0}}$,
\begin{align*}
0=w_{\lambda_{0}}(x)=&u(x)-u(x^{\lambda_{0}}) \nonumber\\
=&\int_{\Sigma_{\lambda_{0}}}\Big(\frac{1}{|x-y|^{n-2m}}-\frac{1}{|x^{\lambda_{0}}-y|^{n-2m}}\Big)
\Big(\frac{u^{p}(y)}{(1+|y|^{2})^{\frac{n+2m}{2}-p\frac{n-2m}{2}}}  \nonumber\\
&-\frac{u^{p}(y^{\lambda_{0}})}{(1+|y^{\lambda_{0}}|^{2})^{\frac{n+2m}{2}-p\frac{n-2m}{2}}}\Big)dy
<0,
\end{align*}
which is a contradiction. Thus we have $\lambda_{0}\geq 0$ and then
$$u(2\lambda_{0}-x_{1},x_{2},\cdots,x_{n})\geq u(x_{1},x_{2},\cdots,x_{n}), \ \forall~x\in\Sigma_{\lambda_0}.$$

On the other hand, by using a similar argument as above, we can also move the plane from $x_{1}=+\infty$ to the left, which yields that $\lambda_{0}\leq 0$ and then
$$u(2\lambda_{0}-x_{1},x_{2},\cdots,x_{n})\leq u(x_{1},x_{2},\cdots,x_{n}), \ \forall~x\in\Sigma_{\lambda_0}.$$
Consequently, $\lambda_{0}=0$ and we have that
$$u(-x_{1},x_{2},\cdots,x_{n})\equiv u(x_{1},x_{2},\cdots,x_{n}), \ \forall~x\in\mathbb{R}^{n},$$
that is, $u(x)$ is symmetric with respect to the plane $T_{0}$. Since the equation is invariant under rotation, the direction of $x_{1}$ can be chosen arbitrarily. Then we deduce that the nonnegative solution $u(x)$ must be radially symmetric and monotone decreasing about the origin $0\in\mathbb{R}^{n}$. This completes the proof of the symmetry.

\end{proof}

\section{The proof of Theorem \ref{thm2}}\label{section3}
In this section, we will give the classification of the extremals for the subcritical high-order Sobolev inequality \eqref{sub} in $\mathbb{S}^n$. Obviously, up to a constant, the extremals of high-order Sobolev inequality \eqref{sub} satisfy the following Euler-Lagrange equation
\begin{equation*}
P_{m,g_{\mathbb{S}^n}}(u)=u^{p}\  {\rm in}\ \mathbb{S}^n,
\end{equation*}
that is, equation \eqref{hlane}. Hence, if we can prove that the extremals of high-order Sobolev inequality \eqref{sub} are non-negative, then applying the Liouville-type result of Theorem \ref{thm1}, we can classify the extremals of the high-order Sobolev inequality \eqref{sub} and compute the corresponding sharp constant. Next, in order to finish the proof of Theorem \ref{thm2}, it suffices for us to prove the nonnegativity of the extremals.
\medskip

\begin{proof}
From the subcritical high-order Sobolev inequality \eqref{sub}, we know that for any $u\in W^{m,2}(\mathbb{S}^n)$,
\begin{equation}\label{in1}
\int_{\mathbb{S}^n}P_{m,g_{\mathbb{S}^n}}(u)ud\sigma\geq S_{m,n,p} \big(\int_{\mathbb{S}^n}|u|^pd\sigma\big)^{\frac{2}{p}}.
\end{equation}
Let $v=P_{m,g_{\mathbb{S}^n}}^{\frac{1}{2}}(u)$, then inequality \eqref{in1} is equivalent to the following inequality
\begin{equation}\label{in2}
\int_{\mathbb{S}^n}v^2d\sigma\geq S_{m,n,p} \big(\int_{\mathbb{S}^n}|P_{m,g_{\mathbb{S}^n}}^{-\frac{1}{2}}(v)|^pd\sigma\big)^{\frac{2}{p}}.
\end{equation}

On the other hand, by the dual argument, one can deduce that the inequality \eqref{in2} is equivalent to
\begin{equation}\label{in3}
\big(\int_{\mathbb{S}^n}v^{p'}d\sigma\big)^{\frac{2}{p'}}\geq S_{m,n,p} \big(\int_{\mathbb{S}^n}P_{m,g_{\mathbb{S}^n}}^{-1}(v)vd\sigma\big).
\end{equation}
Since $P_{m,g_{\mathbb{S}^n}}^{-1}(v)(\xi)=\int_{S^n}\frac{v(\eta)}{|\xi-\eta|^{n-2m}}d\sigma_{\eta}$, the above inequality is equivalent to the following Hardy-Littlewood-Sobolev inequality on $\mathbb{S}^n$,
\begin{equation}\label{in3}
S_{m,n,p}\int_{\mathbb{S}^n}\int_{\mathbb{S}^n}\frac{v(\xi)v(\eta)}{|\xi-\eta|^{n-2m}}d\sigma_{\xi}d\sigma_{\eta}\leq \big(\int_{\mathbb{S}^n}v^{p'}d\sigma\big)^{\frac{2}{p'}}.
\end{equation}
As is well known, the extremals of Hardy-Littlewood-Sobolev inequality on $\mathbb{S}^n$ must be non-negative, hence we conclude that the extremals of the high-order Sobolev inequalities \eqref{sub} must be non-negative. This together with the Liouville type result we obtain in Theorem \ref{thm1} accomplishes the proof of Theorem \ref{thm2}.
\end{proof}


\end{document}